\title{Convex Relaxations for Subset Selection}
\author{Francis Bach\thanks{INRIA-WILLOW Project-Team, Laboratoire d'Informatique de l'Ecole Normale Sup\'erieure (CNRS/ENS/INRIA UMR 8548), 23, avenue d'Italie, 75214 Paris, France. \texttt{francis.bach@mines.org}}, 
Selin Damla Ahipa{\c s}ao{\u g}lu\thanks{ORFE, Princeton University, Princeton, NJ 08544. \texttt{sahipasa@princeton.edu}}, Alexandre d'Aspremont\thanks{ORFE, Princeton University, Princeton, NJ 08544. \texttt{aspremon@princeton.edu}}}
\newtheorem{theorem}{theorem}
\newtheorem{proposition}[theorem]{Proposition}
\newcommand{\QED}{~~\rule[-1pt]{6pt}{6pt}}
\newenvironment{proof}{\textbf{Proof.}}{\QED}
\newcommand{\BEAS}{\begin{eqnarray*}}
\newcommand{\EEAS}{\end{eqnarray*}}
\newcommand{\BEA}{\begin{eqnarray}}
\newcommand{\EEA}{\end{eqnarray}}
\newcommand{\BEQ}{\begin{equation}}
\newcommand{\EEQ}{\end{equation}}
\newcommand{\BIT}{\begin{itemize}}
\newcommand{\EIT}{\end{itemize}}
\newcommand{\BNUM}{\begin{enumerate}}
\newcommand{\ENUM}{\end{enumerate}}
\newcommand{\BMI}{\begin{minipage}}
\newcommand{\EMI}{\end{minipage}}
\newcommand{\BA}{\begin{array}}
\newcommand{\EA}{\end{array}}
\newcommand{\BC}{\begin{center}}
\newcommand{\EC}{\end{center}}
\newcommand{\ones}{\mathbf 1}
\newcommand{\reals}{{\mbox{\bf R}}}
\newcommand{\symm}{{\mbox{\bf S}}}  
\newcommand{\Tr}{\mathop{\bf Tr}}
\newcommand{\diag}{\mathop{\bf diag}}
\newcommand{\lambdamax}{{\lambda_{\rm max}}}
\newcommand{\Card}{\mathop{\bf Card}}
\newcommand{\idm}{{\bf I}}
\newcommand{\Expect}{\mathop{\bf E{}}}
\newcommand{\Prob}{\mathop{\bf Prob}}
\newcommand{\argmax}{\mathop{\rm argmax}}
\begin{document}
\maketitle

\begin{abstract}
We use convex relaxation techniques to produce lower bounds on the optimal value of subset selection problems and generate good approximate solutions. We then explicitly bound the quality of these relaxations by studying the approximation ratio of sparse eigenvalue relaxations. Our results are used to improve the performance of branch-and-bound algorithms to produce exact solutions to subset selection problems.
\end{abstract}

\section{Introduction}
We focus here on the subset selection problem., i.e., solving least squares regressions while constraining the number of nonzero regression variables to be less than a certain target. This problem is often called feature selection or sparse least-squares. Its combinatorial nature makes subset selection intractable. Several techniques have been derived to produce good approximate solutions however, using for example greedy algorithms or sparsity inducing penalties.

Given a design matrix $X\in\reals^{n \times p}$ and a response vector $y\in\reals^n$, we consider the following subset selection problem
\BEQ\label{eq:subsel}
\BA{ll}
\mbox{minimize} & \|y-Xw\|_2^2\\
\mbox{subject to} & \Card(w)\leq k,
\EA\EEQ
in the variable $w\in\reals^p$, where $k$ is a parameter controlling sparsity. 
It was shown in \cite{Nata95} that while~\eqref{eq:subsel} is NP-Hard, simple greedy algorithms can efficiently produce good approximate solutions. Subset selection can also be understood as $\ell_0$ norm constrained regression (or approximation) and a very large body of works focused on replacing the combinatorial $\ell_0$ norm with a convex $\ell_1$ norm constraint, with $\ell_1$ norm regression usually known as LASSO \cite{Tibs96}. Explicit variable selection consistency results have been derived in certain regimes (see e.g. \cite{Mein07a}), and recent results \cite{Dono05a,Cand07} have shown that under certain conditions on the design matrix $X$, the solutions of the $\ell_1$ problem coincided with that of the $\ell_0$ problem. Several authors have attacked the $\ell_0$ problem directly, with \cite{Nare77,Hand81,Furn00,Mogh08} using branch-and-bound techniques to produce exact solutions to problem~\eqref{eq:subsel}, with \cite{Mogh08} in particular using interlacing properties of eigenvalues to speedup branch-and-bound methods. Solving the $\ell_0$ problem in~\eqref{eq:subsel} even for small values of $p$ has direct applications in image denoising \cite{Elad06,Mair08}. 

All the algorithms listed above produce good approximate solutions, hence {\em upper bounds} on the optimal value of the subset selection problem~\eqref{eq:subsel}. Our first contribution here is to use convex relaxation techniques to produce {\em lower bounds} on the optimal value of~\eqref{eq:subsel}. In particular, this result allows us to bound the suboptimality of approximate solutions and improve the performance of branch-and-bound algorithms for subset selection. We also use randomization techniques to generate good solutions to~\eqref{eq:subsel}, often improving on solutions produced by greedy or LASSO algorithms. Our next main contribution is to derive approximation bounds on the performance of the sparse eigenvalue relaxation/randomization algorithm. Finally, we test our algorithms on various subset selection problems and show that the lower bound derived here considerably reduces the number of branches required to produce an optimal solution to~\eqref{eq:subsel}. 

The paper is organized as follows. In Section~\ref{s:relax} we show how to produce lower bounds on the optimal value of problem~\eqref{eq:subsel} using relaxation bounds on sparse eigenvalues. In Section~\ref{s:sols}, we describe greedy, randomization and branch-and-bound  algorithms to generate good approximate solutions $w$ to~\eqref{eq:subsel} using the product of the relaxation. In Section~\ref{s:tight} we produce a bound on the approximation ratio of sparse eigenvalues, thus bounding the quality of the approximation of the subset selection bounds derived in Section~\ref{s:relax}. Section~\ref{s:algos} shows how to efficiently solve our semidefinite relaxation using first-order methods. Finally, Section~\ref{s:numres} presents some numerical experiments.

\subsection*{Notations}
Given matrices $X,Y\in\reals^{n\times p}$, we write $X\circ Y$ their Schur (componentwise) product, while $\lambdamax(X)$ is the leading eigenvalue of $X$, $\|X\|_1$ the sum of absolute values of the coefficients in $X$ and $X_i$ is the $i^th$ column of $X$. We let $\symm_p$ be the set of symmetric matrices, and for $Y\in\symm_p$, we write $\diag(Y)\in\reals^p$ its diagonal. When $y\in\reals^p$, $\diag(y)\in\symm_p$ denotes de diagonal matrix with diagonal coefficients equal to the coefficients of $y$, while $\Card(y)$ is the number of nonzero coefficients in~$y$.

\section{Relaxation \& Lower Bounds}\label{s:relax}
Following \cite{dasp08b} for example, we first recall how solving problem~\eqref{eq:subsel} is equivalent to computing sparse eigenvalues of a matrix formed using $X$ and $y$. We let $\psi(k)$ be the optimal value of the subset selection problem~\eqref{eq:subsel}, with
\BEQ\label{eq:psik}
\BA{rll}
\psi(k)=&\mbox{min.} & \|y-Xw\|_2^2\\
&\mbox{s.t.} & \Card(w)\leq k,
\EA\EEQ
in the variable $w\in\reals^p$, where $k$ is again a parameter controlling sparsity. We can rewrite this
\BEAS
\psi(k)&=&\min_{\substack{\ones^Tu\leq k\\u\in\{0,1\}^p}}~\min_{\substack{\Card(w)\leq k\\ w\in\scriptsize\reals^p}} ~\|y-X\diag(u)w\|_2^2\\
&=&\min_{\substack{\ones^Tu\leq k\\u\in\{0,1\}^p}}~\min_{w\in\scriptsize\reals^p} ~\|y-X\diag(u)w\|_2^2\\
&=&\min_{\substack{\ones^Tu\leq k\\u\in\{0,1\}^p}}~\min_{\|w\|_2=1}~\min_{\nu\in\scriptsize\reals} ~ \|y-X\diag(u)\nu w\|_2^2,
\EEAS
which, after minimizing explicitly over $\nu$, becomes
\[
\min_{\substack{\ones^Tu\leq k\\u\in\{0,1\}^p}}~\min_{\|w\|_2=1} ~y^Ty-\frac{(y^TX\diag(u)w)^2}{w^T\diag(u)X^TX\diag(u)w}.
\]
This means that $\psi(k)\leq y^Ty - \rho$ if and only if
\[
\max_{\substack{\ones^Tu\leq k\\u\in\{0,1\}^p}} ~\max_{\|w\|_2=1} ~ \frac{(y^TX\diag(u)w)^2}{w^T\diag(u)X^TX\diag(u)w} \leq \rho.
\]
We can rewrite this condition
\[
wX^T(yy^T- \rho\idm)Xw \leq 0, \quad \mbox{when }\|w\|_2\leq 1,\,\Card(w)\leq k
\]
which is equivalent to
\BEQ\label{eq:keig-cond}
\lambda^k_\mathrm{max}(X^T(yy^T- \rho\idm)X) \leq 0.
\EEQ
Here, $\lambda^k_\mathrm{max}(A)$ is the $k$ sparse maximum eigenvalue of a matrix $A\in\symm_p$, defined as
\BEQ\label{eq:sparse-eig}
\BA{rll}
\lambda^k_\mathrm{max}(A) =& \mbox{max.} & x^TAx\\
& \mbox{s. t.} & \|x\|= 1,\,\Card(x)\leq k
\EA\EEQ
in the variable $x\in\reals^p$. Relaxation bounds for sparse eigenvalues $\lambda^k_\mathrm{max}(A)$ were derived in \cite{dasp04a,dasp08b}, with the bound in \cite{dasp04a} written
\BEQ\label{eq:relax}
\BA{rll}
\lambda^k_\mathrm{max}(A)\leq&\mbox{maximize} & \Tr AZ\\
&\mbox{subject to} & \|Z\|_1 \leq k\\
&& \Tr(Z)=1,\,Z\succeq 0
\EA\EEQ
in the variable $Z\in\symm_p$. We can summarize the above derivation in the following proposition. 

\begin{proposition}\label{prop:k-eig}
Given a design matrix $X\in\reals^{n \times p}$ and a response vector $y\in\reals^n$, consider the following subset selection problem 
\[\BA{rll}
\psi(k)=&\mbox{min.} & \|y-Xw\|_2^2\\
&\mbox{s.t.} & \Card(w)\leq k,
\EA\]
then 
\[
\psi(k)\geq y^Ty-\rho \quad\mbox{if and only if}\quad \lambda^k_\mathrm{max}(X^T(yy^T- \rho\idm)X) \leq 0,
\]
where $\lambda^k_\mathrm{max}(\cdot)$ is the sparse maximum eigenvalue function defined in~\eqref{eq:sparse-eig}.
\end{proposition}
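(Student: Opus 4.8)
The plan is to retrace the variational reduction sketched before the statement, reading it as a chain of equalities for $\psi(k)$ and then translating the resulting inequality into the sparse-eigenvalue condition. First I would encode the support of $w$ by a binary vector $u\in\{0,1\}^p$ with $\ones^Tu\le k$, so that minimizing over all $k$-sparse $w$ splits into an outer minimization over $u$ and an inner minimization over $w$ constrained to the coordinates selected by $u$. The observation justifying the step $\min_{\Card(w)\le k}\to\min_{w\in\reals^p}$ for a fixed $u$ is that any minimizer of $\|y-X\diag(u)w\|_2^2$ may be taken supported on $\{i:u_i=1\}$, hence already has cardinality at most $\ones^Tu\le k$; the explicit cardinality constraint on $w$ is therefore redundant once $u$ is fixed.

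Next I would homogenize by writing $w=\nu\tilde w$ with $\|\tilde w\|_2=1$ and $\nu\in\reals$, and minimize the quadratic $y^Ty-2\nu\,y^TX\diag(u)\tilde w+\nu^2\,\tilde w^T\diag(u)X^TX\diag(u)\tilde w$ in closed form over $\nu$. When $X\diag(u)\tilde w\neq 0$ this yields the Rayleigh-type value $y^Ty-(y^TX\diag(u)\tilde w)^2/(\tilde w^T\diag(u)X^TX\diag(u)\tilde w)$; in the degenerate case $X\diag(u)\tilde w=0$ the linear term vanishes as well and the minimum is simply $y^Ty$, consistent with reading the ratio as zero. Collecting the two minimizations into a single maximization over the admissible $(u,\tilde w)$ gives $\psi(k)=y^Ty-R$, where $R$ denotes that maximal ratio.

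From the identity $\psi(k)=y^Ty-R$ the equivalence $\psi(k)\ge y^Ty-\rho$ if and only if $R\le\rho$ is immediate. The remaining step is to recognize $R\le\rho$ as the sparse-eigenvalue condition. Setting $v=\diag(u)w$ and noting that, as $u$ ranges over $\{0,1\}^p$ with $\ones^Tu\le k$ and $w$ over $\reals^p$, the vector $v$ ranges over exactly the $k$-sparse vectors, the inequality $R\le\rho$ reads $(y^TXv)^2\le\rho\,v^TX^TXv$ for every $v$ with $\Card(v)\le k$. Using $(y^TXv)^2=v^TX^T(yy^T)Xv$, this becomes $v^TX^T(yy^T-\rho\idm)Xv\le0$ for all such $v$.

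Finally I would identify this with $\lambda^k_\mathrm{max}(X^T(yy^T-\rho\idm)X)\le 0$: since the quadratic form and the cardinality constraint are both invariant under scaling of $v$, requiring $v^TAv\le0$ for all $k$-sparse $v$ is the same as requiring it over $k$-sparse unit vectors, which is precisely the statement that the $k$-sparse maximum eigenvalue of $A=X^T(yy^T-\rho\idm)X$ defined in~\eqref{eq:sparse-eig} is nonpositive. The steps are elementary, so the points needing care are bookkeeping rather than difficulty: tracking the sign so that the reduction produces $\psi(k)\ge y^Ty-\rho$ (equivalently $R\le\rho$) rather than the reverse, justifying the removal of the redundant cardinality constraint on $w$, and handling the degenerate directions $X\diag(u)w=0$ so that the passage to the ratio is valid. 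I expect the identification of $\{\diag(u)w\}$ with the full set of $k$-sparse vectors, together with this sign tracking, to be the only places where an error could slip in.
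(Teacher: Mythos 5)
Your proof is correct and follows essentially the same route as the paper's own derivation: encoding the support by a binary vector $u$, dropping the redundant cardinality constraint, homogenizing and minimizing over $\nu$ to reach the Rayleigh-quotient form $\psi(k)=y^Ty-R$, and translating $R\le\rho$ into $\lambda^k_\mathrm{max}(X^T(yy^T-\rho\idm)X)\le 0$ by scale invariance. Your extra care pays off: you correctly handle the degenerate directions $X\diag(u)w=0$ (where both numerator and denominator vanish, so reading the ratio as zero is consistent), and your sign bookkeeping yields $\psi(k)\ge y^Ty-\rho \Leftrightarrow R\le\rho$, which matches the proposition as stated, whereas the paper's inline derivation contains a sign typo ($\psi(k)\le y^Ty-\rho$) at exactly that step.
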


\section{Approximate Solutions}\label{s:sols}
The relaxation detailed in~\eqref{eq:relax} produces a {\em lower bound} on the objective value of problem~\eqref{eq:subsel}. In this section, we describe how to use the solution of this relaxation to produce good approximate solution vectors $w$ to problem~\eqref{eq:subsel}, hence produce {\em upper bounds} on the solution value. We first describe greedy algorithms which can be used to solve problem~\eqref{eq:subsel} independently, or to improve solutions extracted from convex relaxations.

\subsection{Greedy methods}\label{ss:greedy}
To simplify notations here, we first define the following function, which computes the solution value of problem~\eqref{eq:subsel} {\em given} the support of the solution vector $w\in\reals^p$. Let $I\subset[1,p]$ be a index subset such that $w_i= 0$ if $i\notin I$, we write $I^c$ its complement in $[1,p]$ and let
\BEQ\label{eq:Ifval}
\mu(I)= \min_{w_{I^c}=0}\|y-Xw\|_2^2
\EEQ
in the variable $w\in\reals^p$. Note that while computing the optimal value of problem~\eqref{eq:subsel} is NP-Hard, computing $\mu(I)$ in~\eqref{eq:Ifval} is equivalent to forming a QR decomposition of the matrix $X_I\in\reals^{n\times k}$ where~$k$ is the cardinality of the support set $I$.

We can greedily construct approximate solutions to~\eqref{eq:Ifval} by scanning variables at each iteration to increase (or decrease) the size of the support as in the {\em forward greedy} algorithm is detailed in Algorithm~\ref{alg:greedy-search}. The {\em backward greedy algorithm} is similar but starts from the full support $[1,p]$ and progressively removes points.

\begin{algorithm}[h]
\caption{Forward Greedy Algorithm.} 
\label{alg:greedy-search} 
\begin{algorithmic} [1]
\REQUIRE $X \in \reals^{n \times p},~y\in\reals^n$, target cardinality $k^\mathrm{target}$.
\STATE Initialization: $I_0 = \emptyset$.
\FOR{$i=1$ to $k^\mathrm{target}$} 
\STATE Compute $i_k = \argmax_{i \notin I_{k-1}}  \mu(I_{k-1} \cup \{ i \})$
\STATE Set $I_{k}=I_{k-1}\cup\{i_k\}$ and compute $w_{k}$ as the minimizer of $\mu(I_k)$ in~\eqref{eq:Ifval}.
\ENDFOR 
\ENSURE Support sets $I_k$ for $w$ in problem~\eqref{eq:subsel}, with $k=1,\ldots,k^\mathrm{target}$.
\end{algorithmic} 
\end{algorithm} 

\subsection{Randomization}\label{ss:rand}
As in the MAXCUT relaxation by \cite{goem95} for example, we can use the matrix solution to the relaxation in~\eqref{eq:relax} to generate good approximate solutions to problem~\eqref{eq:subsel}. The solution matrix $Z$ in~\eqref{eq:relax} can be understood as a covariance matrix, and we use it to generate Gaussian vectors $z\sim{\cal N}(0,Z)$. The $k$ indices corresponding to the $k$ largest magnitude coefficients of the sample vectors $z$ then provide support sets  $I$ corresponding to nonzero coefficients in $w$. Given these support sets, one then solves for $\mu(I)$ in~\eqref{eq:Ifval} to get upper bounds on the optimal value of~\eqref{eq:subsel} and approximate solution vectors $w$. 

In the next section, we will also consider another much simpler randomization procedure whose performance can be completely characterized. This second procedure does not require solving relaxation~\eqref{eq:relax}, but simply computing a leading eigenvector $x$ of the matrix $A$ in~\eqref{eq:sparse-eig}.
Good approximate solutions $z\in\reals^p$ to problem~\eqref{eq:sparse-eig} are then randomly sampled 
with $z_i=1/\sqrt{k}$ with probability $p_i=k |x_i|/\|x\|_1$ and $z_i=0$ otherwise. We then prune $z$ using a few backward greedy step, whenever $\Card(z)>k$. While the complexity of this procedure is much lower than that of the full greedy algorithm, we will see in the next section that it produces solutions of comparable quality.

\subsection{Branch-and-bound algorithm}\label{s:bandb}
As in \cite{Furn00,Mogh08}, we can develop a branch-and-bound algorithm for finding optimal solutions to~\eqref{eq:subsel}. Suppose we are looking for a vector in $\reals^p$ with at most $k$ non-zero components, we need to enumerate at most $p \choose k$ subsets to find the best one. We start by dividing all possible subsets into two branches, one containing the first variable and one which does not. We further branch each of these branches into two, one containing the second variable and one not, etc.  At each node of the search tree, we have a subproblem that excludes certain variables (depending on branching decisions made so far). For each subproblem, we generate lower bounds using Proposition~\ref{prop:k-eig} by solving relaxation~\eqref{eq:relax}, and upper bounds when there are exactly $k$ variables left on the branch. We also generate upper bounds by applying a combination of the greedy algorithms and randomization techniques described above to the solutions of the relaxed problems. Obviously, we fathom a node whose lower bound exceeds the best upper bound since the branches diverging from this node cannot contain a better solution than the best solution found so far.

\section{Tightness}\label{s:tight}
The sparse eigenvalue problem in~\eqref{eq:sparse-eig} is closely connected to the $k$-Dense-Subgraph problem described in \cite{Kort93,Feig01a,Feig01} for example. The $k$-Dense-Subgraph problem seeks to find a principal submatrix of $A$ of dimension $k$ with largest coefficient sum. This is written
\[
\max_{\ones^Tu\leq k} ~ u^TAu
\]
in the variable $u\in\{0,1\}^{p}$. On the other hand, the problem of computing a sparse maximum eigenvalue can be written 
\[
\lambda^k_{\mathrm{max}}(A) = \max_{\ones^Tu\leq k} ~\max_{\|x\|=1} ~ u^T(A\circ xx^T)u
\]
in the variables $x\in\reals^p$, $u\in\{0,1\}^p$. We thus observe that computing sparse eigenvalues means solving a $k$-Dense-Subgraph problem over the result of an inner eigenvalue problem in $x$. Below, we first recall an approximation result on the backward greedy algorithm used in \cite{Mogh08}, which applies to {\em positive semidefinite} matrices $A$.

\begin{proposition}\label{prop:greedy}
Let $A\in\symm_p$, with $A\succeq 0$ and $k>0$ and suppose $\diag(A) \geq 0$. We have
\BEQ\label{eq:tight-greedy}
\frac{k}{p} \lambda_{\mathrm{max}}(A)  \leq \lambda^k_{\mathrm{max}}(A)  \leq \lambda_{\mathrm{max}}(A) 
\EEQ
where $\lambda^k_{\mathrm{max}}(A)$ is the optimal value of problem~\eqref{eq:sparse-eig}.
\end{proposition}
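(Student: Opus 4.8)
The upper bound is immediate: the feasible set of~\eqref{eq:sparse-eig} is contained in the unit sphere, so $\lambda^k_{\mathrm{max}}(A)\leq\max_{\|x\|=1}x^TAx=\lambda_{\mathrm{max}}(A)$. For the lower bound I would first rewrite the sparse eigenvalue as a search over supports: for a fixed index set $I$ with $|I|\le k$, optimizing~\eqref{eq:sparse-eig} over vectors supported on $I$ gives $\lambda_{\mathrm{max}}(A_{II})$, the largest eigenvalue of the principal submatrix $A_{II}$, so that $\lambda^k_{\mathrm{max}}(A)=\max_{|I|\le k}\lambda_{\mathrm{max}}(A_{II})$. It therefore suffices to exhibit one principal submatrix of order $k$ whose top eigenvalue is at least $\tfrac{k}{p}\lambda_{\mathrm{max}}(A)$, and I would do this by analyzing the backward greedy scheme that starts from the full index set and peels off one coordinate at a time.

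The core is a single-removal estimate. Suppose the current support is $S$ with $|S|=s$, let $B=A_{SS}\succeq0$, let $\lambda_S=\lambda_{\mathrm{max}}(B)$ and let $z$ be a corresponding unit eigenvector, so $Bz=\lambda_S z$. Deleting coordinate $i\in S$ and using the truncated vector $z-z_ie_i$ as a test vector for $A_{S\setminus\{i\}}$, the eigenvalue relation $(Bz)_i=\lambda_S z_i$ gives, after a short computation, $\lambda_{\mathrm{max}}(A_{S\setminus\{i\}})\ge N_i/D_i$ with $N_i=\lambda_S(1-2z_i^2)+A_{ii}z_i^2$ and $D_i=1-z_i^2$. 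Choosing the best coordinate to delete and applying the mediant inequality $\max_i N_i/D_i\ge(\sum_iN_i)/(\sum_iD_i)$ (the degenerate case $z=e_i$ is trivial, as any other coordinate may then be removed at no cost), I obtain $\max_i N_i/D_i\ge\frac{\lambda_S(s-2)+d_S}{s-1}$, where $d_S=\sum_{i\in S}A_{ii}z_i^2$.

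The crux --- and the step I expect to be the main obstacle --- is to show $d_S\ge\lambda_S/s$, since without it the per-step factor degrades and one only recovers a weaker ratio of order $k(k-1)/(p(p-1))$. Here I would use positive semidefiniteness decisively: writing the spectral decomposition $B=\sum_l\mu_l v_lv_l^T$ with all $\mu_l\ge0$ and $\mu_1=\lambda_S$, $v_1=z$, one has $A_{ii}=\sum_l\mu_l(v_l)_i^2$, so $d_S=\sum_l\mu_l\sum_i z_i^2(v_l)_i^2\ge\mu_1\sum_i z_i^4$, every dropped term being nonnegative. Since $\sum_i z_i^2=1$, Cauchy--Schwarz gives $\sum_i z_i^4\ge1/s$, whence $d_S\ge\lambda_S/s$. (Note that $\diag(A)\ge0$ is in fact automatic from $A\succeq0$; it only serves to keep $d_S\ge0$ transparent.) Substituting $d_S\ge\lambda_S/s$ into the bound of the previous paragraph and simplifying yields the clean per-step estimate $\lambda_{\mathrm{max}}(A_{S\setminus\{i\}})\ge\frac{s-1}{s}\lambda_{\mathrm{max}}(A_{SS})$ for the best choice of $i$.

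Finally I would telescope. Running the removal step from $s=p$ down to $s=k+1$ produces a nested chain of supports $[1,p]=S_p\supset\cdots\supset S_k$ with $|S_s|=s$ and $\lambda_{\mathrm{max}}(A_{S_{s-1}})\ge\frac{s-1}{s}\lambda_{\mathrm{max}}(A_{S_s})$, so that $\lambda^k_{\mathrm{max}}(A)\ge\lambda_{\mathrm{max}}(A_{S_k})\ge\Big(\prod_{s=k+1}^{p}\frac{s-1}{s}\Big)\lambda_{\mathrm{max}}(A)=\frac{k}{p}\lambda_{\mathrm{max}}(A)$, the product collapsing telescopically to $k/p$. This establishes the lower bound and, together with the trivial upper bound, gives~\eqref{eq:tight-greedy}.
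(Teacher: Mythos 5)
Your proof is correct, and at the skeleton level it follows the same route as the paper: establish a one-coordinate-removal inequality, namely $\lambda^{s-1}_{\mathrm{max}}(A) \geq \frac{s-1}{s}\,\lambda^{s}_{\mathrm{max}}(A)$ for $A\succeq 0$, and then telescope from $s=p$ down to $s=k$ so that the product of factors collapses to $k/p$. The difference is in how that key inequality is obtained: the paper's entire proof is a citation to \cite[\S4.3.14]{Horn85} followed by ``a simple recursion,'' whereas you prove the cited inequality from scratch --- truncating the top eigenvector ($w = z - z_i e_i$), computing the Rayleigh quotient $N_i/D_i$, taking the best index via the mediant inequality, and, in the step where positive semidefiniteness is actually used, bounding the diagonal through the spectral decomposition to get $B_{ii} \geq \lambda_S z_i^2$, hence $d_S \geq \lambda_S \sum_i z_i^4 \geq \lambda_S/s$ by Cauchy--Schwarz. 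All of these steps check out, including the degenerate case $z = \pm e_i$ (where some $D_i$ vanishes) and the final telescoping. What your write-up buys is a self-contained, elementary proof that makes explicit where $A\succeq 0$ enters; your side remarks are accurate as well: the hypothesis $\diag(A)\geq 0$ is indeed redundant once $A\succeq 0$ is assumed (it is a leftover from the weaker non-PSD variant), and dropping positive semidefiniteness degrades the per-step factor in exactly the way that produces the $\frac{k(k-1)}{p(p-1)}$ ratio the paper states, without proof, immediately after the proposition.
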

\begin{proof} From \cite[\S4.3.14]{Horn85}, when $A\succeq 0$, we have
\[
\lambda^i_{\mathrm{max}}(A) \geq \frac{i}{i+1} \lambda^{i+1}_{\mathrm{max}}(A) 
\]
for any $i\in[1,p-1]$. A simple recursion then gives the desired result.
\end{proof}

When $A$ is not positive semidefinite, we can adapt results from \cite{Feig97} to show
\BEQ\label{eq:tight-greedy}
\frac{k(k-1)}{p(p-1)} \lambda_{\mathrm{max}}(A)  \leq \lambda^k_{\mathrm{max}}(A)  \leq \lambda_{\mathrm{max}}(A).
\EEQ

When the coefficients of $A$ are nonnegative, we can obtain approximation bounds for basic randomization techniques similar to those developed in \cite{Feig97} for the $k$-Dense-Subgraph problem. The approximation ratio in this case  also decreases as $k/p$, which shows that the randomization algorithm has a performance comparable to that of the backward greedy method, while being significantly cheaper on large scale problems.

\begin{proposition}\label{prop:sdp-nonneg}
Let $A\in\symm_p$ such that $A_{ij}\geq 0,\, i,j=1,\ldots,p$ and $k>1$. The sparse eigenvalue problem defined in~\eqref{eq:sparse-eig} was written
\[
\BA{rll}
\lambda^k_\mathrm{max}(A) =& \mbox{max.} & x^TAx\\
& \mbox{s. t.} & \|x\|_2\leq 1,\,\Card(x)\leq k
\EA\]
in the variable $x\in\reals^p$. We then have
\BEQ\label{eq:tight-SDP-nonneg}
\frac{k}{p}\, \mu(k,p) \, \lambdamax(A) \leq \lambda^k_\mathrm{max}(A) \leq SDP_k(A) \leq \lambdamax(A)
\EEQ
where 
\BEQ \label{eq:mukp}
\mu(k,p) = \left(1-\frac{2}{k^{1/3}}\right)\left(1-\frac{p^2}{k^2} e^{-p^{1/9}/3}\right) \xrightarrow[p \rightarrow \infty]{}1
\EEQ
whenever $k\geq p^{1/3}$, for $p$ sufficiently large, where $SDP_k(A)$ is the optimal value of~\eqref{eq:relax}.
\end{proposition}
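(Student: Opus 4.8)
The plan is to prove the four inequalities of~\eqref{eq:tight-SDP-nonneg} separately, with the two rightmost ones immediate and the leftmost one carrying all the work via a randomized rounding of the Perron eigenvector. The bound $\lambda^k_{\mathrm{max}}(A)\le SDP_k(A)$ is exactly the relaxation~\eqref{eq:relax} already recorded. For $SDP_k(A)\le\lambdamax(A)$, I would drop the constraint $\|Z\|_1\le k$ from~\eqref{eq:relax}: the relaxed feasible set $\{Z\succeq0,\,\Tr Z=1\}$ gives $\max\Tr AZ=\lambdamax(A)$, and deleting a constraint only increases the optimum, so the inequality follows. These are one-liners.

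The substance is the lower bound $\frac{k}{p}\mu(k,p)\lambdamax(A)\le\lambda^k_{\mathrm{max}}(A)$, which I would establish by exhibiting a random feasible point of~\eqref{eq:sparse-eig} whose expected objective is at least $\frac{k}{p}\mu(k,p)\lambdamax(A)$. Since $A$ has nonnegative entries, Perron--Frobenius lets me take the leading unit eigenvector $x\ge0$, so $x^TAx=\lambdamax(A)$. I set a deflated budget $\kappa=k-k^{2/3}$ and sample $z$ coordinatewise and independently with $z_i=1/\sqrt{k}$ with probability $p_i=\min(1,\kappa x_i/\|x\|_1)$ and $z_i=0$ otherwise (the cap matters only in the degenerate concentrated case, handled separately: there a single coordinate already realizes $\lambdamax(A)$). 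A direct second-moment computation gives $\mathbf{E}[z^TAz]=\frac1k\big(\sum_{i\ne j}A_{ij}p_ip_j+\sum_i A_{ii}p_i\big)$; comparing this with $\frac{\kappa^2}{k\|x\|_1^2}x^TAx$, the off-diagonal terms match and the diagonal remainder equals $\frac{\kappa}{k\|x\|_1}\sum_i A_{ii}x_i(1-p_i)\ge0$ by nonnegativity, so $\mathbf{E}[z^TAz]\ge\frac{\kappa^2}{k\|x\|_1^2}\lambdamax(A)$. Cauchy--Schwarz gives $\|x\|_1\le\sqrt p$, and $\kappa^2/k\ge k(1-2k^{-1/3})$, producing the first factor $(1-2/k^{1/3})$ of $\mu(k,p)$.

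Next I would convert this expectation into a statement about feasible samples. The support size $N=\Card(z)$ is a sum of independent Bernoullis with $\mathbf{E}[N]\le\kappa=k-k^{2/3}$, so a multiplicative Chernoff bound with relative deviation $\delta\approx k^{-1/3}$ yields $P(N>k)\le\exp(-k^{1/3}/3)\le\exp(-p^{1/9}/3)$, the last step using $k\ge p^{1/3}$; this is precisely where the second factor of $\mu(k,p)$ and the hypothesis $k\ge p^{1/3}$ enter. On the event $\{N\le k\}$ the vector $z$ is feasible for~\eqref{eq:sparse-eig}, since it has at most $k$ nonzeros and $\|z\|_2=\sqrt{N/k}\le1$, so $z^TAz\le\lambda^k_{\mathrm{max}}(A)$ there; on the complement I only need the crude bound $z^TAz=\frac1k\sum_{i,j\in S}A_{ij}\le\frac{N}{k}\lambdamax(A)\le\frac{p}{k}\lambdamax(A)$, where $S$ is the support. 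Splitting $\mathbf{E}[z^TAz]$ over the two events, the bad part contributes at most $\frac{p}{k}\lambdamax(A)\,P(N>k)\le\frac{p}{k}\lambdamax(A)e^{-p^{1/9}/3}$, hence $\lambda^k_{\mathrm{max}}(A)\ge\mathbf{E}[z^TAz\,\mathbf{1}_{\{N\le k\}}]\ge\frac{k}{p}\lambdamax(A)\big((1-2k^{-1/3})-\frac{p^2}{k^2}e^{-p^{1/9}/3}\big)$, and since $(1-a)(1-b)\le(1-a)-b$ for $a,b\in[0,1]$, this is at least $\frac{k}{p}\mu(k,p)\lambdamax(A)$, completing the chain.

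The main obstacle I anticipate is not any single inequality but the calibration coupling the cardinality (feasibility) control to the value bound: the deflation amount $k^{2/3}$, the Chernoff relative deviation, and the hypothesis $k\ge p^{1/3}$ must all be tuned so that the overflow probability is exactly of order $e^{-p^{1/9}/3}$ while the deflation loss stays $(1-2k^{-1/3})$. A secondary nuisance is the probability cap $\min(1,\cdot)$: when some $x_i$ is large the clean formula $p_i=\kappa x_i/\|x\|_1$ exceeds $1$ and the second-moment identity must be adjusted, but that regime is exactly when a single coordinate already gives $\lambda^k_{\mathrm{max}}(A)=\lambdamax(A)$, so it is dispatched separately.
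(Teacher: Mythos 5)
Your architecture is genuinely different from the paper's, and most of it is sound. The paper samples at full budget (probabilities $k\sqrt{Z_{ii}}/S$, summing to $k$), then runs a two-event probabilistic argument---choosing $\beta$ so that $\Prob[z^TAz \ge \Expect[z^TAz]/\beta] > e^{-p^{1/9}/3} \ge \Prob[\Card(z)\ge k(1+k^{-1/3})]$---to extract a single sample that is simultaneously good in value and only slightly over budget, and finally prunes it back to cardinality $k$ with backward greedy steps, paying the $(1-2/k^{1/3})$ factor through the greedy approximation bound~\eqref{eq:tight-greedy}. You instead deflate the budget to $\kappa=k-k^{2/3}$ so that the $(1-2/k^{1/3})$ loss appears already in $\Expect[z^TAz]$, and handle the now-rare overflow by splitting the expectation over $\{\Card(z)\le k\}$; this removes both the $\beta$-argument and any appeal to the greedy bound, which is a real simplification. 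Your second-moment identity, the Cauchy--Schwarz step $\|x\|_1\le\sqrt{p}$, the Chernoff estimate, and the crude bound $z^TAz\le\frac{p}{k}\lambdamax(A)$ on the overflow event are all correct.

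The final step, however, is wrong, and as written the proposal does not prove the stated inequality. You invoke ``$(1-a)(1-b)\le(1-a)-b$ for $a,b\in[0,1]$'', but the true relation is the reverse: $(1-a)(1-b)=(1-a)-b+ab\ge(1-a)-b$. What your chain actually establishes is $\lambda^k_{\mathrm{max}}(A)\ge\frac{k}{p}\lambdamax(A)\left[\left(1-\frac{2}{k^{1/3}}\right)-\frac{p^2}{k^2}e^{-p^{1/9}/3}\right]$, which falls short of $\frac{k}{p}\,\mu(k,p)\,\lambdamax(A)$ by exactly $\frac{k}{p}\lambdamax(A)\cdot\frac{2}{k^{1/3}}\cdot\frac{p^2}{k^2}e^{-p^{1/9}/3}>0$; no ``$p$ sufficiently large'' clause removes a deficit that is positive for every $p$. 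The repair lies inside your own computation: do not discard the cross term in $\kappa^2/k=k(1-k^{-1/3})^2=k(1-2k^{-1/3}+k^{-2/3})$. Since $k^{-2/3}\ge 2k^{-1/3}\frac{p^2}{k^2}e^{-p^{1/9}/3}$ once $p$ is large (using $k\ge p^{1/3}$, the right side is exponentially small while the left is polynomial), the retained $k^{-2/3}$ dominates the missing $ab$ and the product form~\eqref{eq:mukp} follows. A secondary caveat: your dispatch of the probability cap is not correct as stated---$\kappa x_i/\|x\|_1>1$ for some $i$ does not imply that a single coordinate attains $\lambdamax(A)$, since $A_{ii}<\lambdamax(A)$ unless $x=e_i$ exactly---though, to be fair, the paper's own proof silently has the same issue, as its probabilities $k\sqrt{Z_{ii}}/S$ can also exceed one.
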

\begin{proof}
To maintain the parallel with \cite{Feig97}, we write $Z=xx^T$, where $x$ is a leading eigenvector of $A$. The matrix $Z$ then satisfies $\Tr Z=1$ and $Z\succeq 0$. The upper bound in~\eqref{eq:tight-SDP-nonneg} follows directly from~\cite{dasp04a} and we focus here on the lower bound. We randomly sample vectors $z\in\reals^p$ such that
\[
z_i=
\left\{\BA{l}
1/\sqrt{k} \quad\mbox{with probability } p_i=k\sqrt{Z_{ii}}/S,\\
0 \quad\mbox{otherwise.}
\EA\right.
\]
where $S=\sum_{i=1}^p \sqrt{Z_{ii}}$. We then have 
\BEAS
\Expect[z^TAz] & = & \Tr(A\Expect[zz^T])\\
& = & \sum_{i,j=1}^p k A_{ij} \sqrt{Z_{ii}Z_{jj}}/S^2\\
& \geq & \sum_{i,j=1}^p k A_{ij} \sqrt{Z_{ii}Z_{jj}}/p\\
& \geq & \frac{ k}{p} \Tr(AZ)
\EEAS
where the first inequality uses $\Tr Z=1$ and the last (Cauchy) inequality follows from the fact that $Z\succeq 0$ and $A\geq 0$. Now, let $q=\Prob[z^TAz \leq \Expect[z^TAz]/\beta]$ for some $\beta \geq 1$, we have
\[
\Expect[z^TAz] \leq q \frac{\Expect[z^TAz]}{\beta} + (1-q) \frac{\ones^TA\ones}{k}
\]
which means
\[
q \leq 1-\frac{\beta-1}{\beta \ones^TA\ones/k\Expect[z^TAz]-1}.
\]
because $\ones^TA\ones/k\geq \Expect[z^TAz] $. Now, using Chernoff's inequality as in \cite[Lem. 4.1]{Feig97} produces
\[
\Prob\left[\Card(z) - \ones^Tp \geq t \ones^Tp \right]\leq e^{-\frac{t^2 \ones^Tp}{3}},
\]
so, as in \cite[Th. 4.1]{Feig97}, when $k \geq p^{1/3}$
\[
\Prob\left[\Card(z) \geq k\left(1+k^{-1/3}\right)\right] \leq e^{-p^{1/9}/3}.
\]
We have
\[
\frac{\ones^TA\ones}{k\Expect[z^TAz]}\leq \frac{p \ones^TA\ones}{k^2 \Tr AZ} \leq \frac{p^2 \lambda_{\mathrm{max}}(A)}{k^2 \Tr AZ}= \frac{p^2}{k^2}
\]
which follows from $A\geq 0$, $\Tr AZ=\lambdamax(A)$,  $\Tr Z=1$ with $Z\succeq 0$. When $k\geq p^{1/3}$ and $p$ is large enough so that $p^2 e^{-p^{1/9}/3}/k^2<1$, we can enforce
\[
\beta > \frac{1}{1-p^2 e^{-p^{1/9}/3}/k^2}
\]
and thus get
\[
\beta > \frac{e^{p^{1/9}/3}-1}{e^{p^{1/9}/3}-{\ones^TA\ones}/{k\Expect[z^TAz]}},
\]
which means, using the bound on $q$ derived above,
\[
1 -q \geq \frac{\beta-1}{\beta \ones^TA\ones/k\Expect[z^TAz]-1} > e^{-\frac{t^2 \ones^Tp}{3}},
\]
which, combined with the deviation bounds detailed above, yields
\[
\Prob[z^TAz \geq \Expect[z^TAz]/\beta] = 1-q > e^{-p^{1/9}/3} \geq \Prob\left[\Card(z) \geq k\left(1+k^{-1/3}\right)\right].
\]
This shows that by sampling enough points~$z$, we can generate a vector $z_0\in\reals^p$ such that
\[
z_0^TAz_0 \geq \frac{k}{\beta p} \Tr(AZ) 
\quad\mbox{and}\quad
\Card(z_0)\leq k\left(1+k^{-1/3}\right)
\]
If we remove at most $k^{2/3}$ variables from $z_0$ using the backward greedy algorithm described in the previous section,~\eqref{eq:tight-greedy} shows that we loose at most a factor
\[
\frac{k(k-1)}{(k+k^{2/3})(k+k^{2/3}-1)}=1-\frac{2}{k^{1/3}}+\frac{3}{k^{2/3}}+o\left(\frac{1}{k^{2/3}}\right)
\]
and, when $p$ is large enough, we obtain a point $z_k$ such that
\[
z_k^TAz_k \geq \frac{k}{p} \left(1-\frac{2}{k^{1/3}}\right) \frac{\Tr(AZ)}{\beta}, 
\quad 
\|z_k\|_2\leq 1
\quad\mbox{and}\quad
\Card(z_k)\leq k,
\]
which means that $z_k$ is a feasible point of problem~\eqref{eq:sparse-eig}, and yields the desired result.
\end{proof}

Note that the randomization procedure detailed in the proof above is simpler than the one we used in Section~\ref{ss:rand}, producing bounds on the performance of the later one is unfortunately much harder. We can directly extend this last proposition to problems where $A$ has negative coefficients, but the bound is not proportional in this case.

\begin{proposition}\label{prop:sdp}
Let $A\in\symm_p$ and $k>1$. We have
\BEQ\label{eq:tight-SDP}
\min\left\{0,\min_{i,j=1,\ldots,p}A_{ij}\right\}k+\frac{k}{p}\mu(k,p) \lambdamax(A) \leq \lambda^k_\mathrm{max}(A) \leq SDP_k(A) \leq \lambdamax(A),
\EEQ
where $SDP_k(A)$ is the optimal value of~\eqref{eq:relax} and $\mu(k,p)$ is defined in~\eqref{eq:mukp}, whenever $k\geq p^{1/3}$ and $p$ is sufficiently large.
\end{proposition}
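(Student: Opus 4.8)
The plan is to reduce the general case to the nonnegative case already settled in Proposition~\ref{prop:sdp-nonneg}. The two right-hand inequalities $\lambda^k_\mathrm{max}(A)\leq SDP_k(A)\leq\lambdamax(A)$ hold for \emph{any} symmetric $A$ and require no sign assumption: the first is exactly relaxation~\eqref{eq:relax} from \cite{dasp04a} (the feasible point $Z=xx^T$ built from an optimal sparse eigenvector satisfies $\Tr Z=1$, $Z\succeq0$ and $\|Z\|_1=\|x\|_1^2\leq\Card(x)\|x\|_2^2\leq k$), and the second follows because dropping the constraint $\|Z\|_1\leq k$ from~\eqref{eq:relax} turns its value into $\max\{\Tr AZ:\Tr Z=1,\,Z\succeq0\}=\lambdamax(A)$, so reinstating that constraint can only lower the objective. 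Hence only the left-most (lower) bound needs work.

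To handle negative coefficients, I would set $m=\min\{0,\min_{ij}A_{ij}\}\leq0$ and form the shifted matrix $B=A-m\,\ones\ones^T$, whose entries $B_{ij}=A_{ij}-m$ are all nonnegative by the choice of $m$. Since $B$ still meets the hypotheses of Proposition~\ref{prop:sdp-nonneg} (same $k$, same regime $k\geq p^{1/3}$ with $p$ large), that proposition applies verbatim to $B$ and gives $\frac{k}{p}\mu(k,p)\,\lambdamax(B)\leq\lambda^k_\mathrm{max}(B)$, with $\mu(k,p)$ as in~\eqref{eq:mukp}.

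The next step is to transfer this bound from $B$ back to $A$ via two elementary comparisons. For the sparse eigenvalue, let $x$ be an optimal point of~\eqref{eq:sparse-eig} for $B$; it is feasible for $A$ as well, so $\lambda^k_\mathrm{max}(A)\geq x^TAx=x^TBx+m(\ones^Tx)^2=\lambda^k_\mathrm{max}(B)+m(\ones^Tx)^2$. Because $\Card(x)\leq k$ and $\|x\|_2\leq1$, Cauchy--Schwarz on the support yields $(\ones^Tx)^2\leq\Card(x)\|x\|_2^2\leq k$, and since $m\leq0$ this gives $\lambda^k_\mathrm{max}(A)\geq\lambda^k_\mathrm{max}(B)+mk$. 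For the full eigenvalue, $B=A-m\,\ones\ones^T=A+|m|\,\ones\ones^T\succeq A$ because $\ones\ones^T\succeq0$, so $\lambdamax(B)\geq\lambdamax(A)$. Chaining these with the nonnegative bound and using $\frac{k}{p}\mu(k,p)\geq0$ for $p$ large,
\[
\lambda^k_\mathrm{max}(A)\geq\lambda^k_\mathrm{max}(B)+mk\geq\frac{k}{p}\mu(k,p)\lambdamax(B)+mk\geq\frac{k}{p}\mu(k,p)\lambdamax(A)+mk,
\]
which is exactly the lower bound in~\eqref{eq:tight-SDP}.

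Since the whole randomization-and-Chernoff machinery is inherited from Proposition~\ref{prop:sdp-nonneg}, no genuinely hard step remains; the proof is a reduction. The only points needing care are the sign bookkeeping around $m\leq0$ (so that $m(\ones^Tx)^2\geq mk$ goes the right way) and the Cauchy--Schwarz estimate $(\ones^Tx)^2\leq k$. These are precisely where the additive $mk$ correction enters, and why the resulting guarantee is no longer a purely multiplicative approximation of $\lambdamax(A)$.
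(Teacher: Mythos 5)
Your proof is correct and follows essentially the same route as the paper: shift $A$ by $\min_{ij}A_{ij}\,\ones\ones^T$ to reduce to the nonnegative case of Proposition~\ref{prop:sdp-nonneg}, then transfer the bound back using $(\ones^Tx)^2\leq\Card(x)\|x\|_2^2\leq k$. The only (cosmetic) difference is that where the paper invokes convexity/subgradient inequalities for $\lambda^k_\mathrm{max}(\cdot)$ and $\lambdamax(\cdot)$, you use the equivalent elementary arguments of feasibility of the optimizer of the shifted problem and Loewner monotonicity of $\lambdamax$, which if anything states the comparison $\lambdamax(A-m\ones\ones^T)\geq\lambdamax(A)$ more cleanly than the paper does.
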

\begin{proof}
The function $\lambda^k_\mathrm{max}(\cdot)$ defined in~\eqref{eq:sparse-eig} is convex as a pointwise maximum of affine functions. This implies
\[
\lambda^k_\mathrm{max}(A)\geq \lambda^k_\mathrm{max}(A-\min_{ij}A_{ij}~\ones\ones^T) + \min_{ij}A_{ij} (\ones^Tx)^2
\]
for some vector $x$ satisfying $\|x\|_2= 1$ and $\Card(x)\leq k$. The matrix $A-\min_{ij}A_{ij}~\ones\ones^T$ is nonnegative and Proposition~\ref{prop:sdp-nonneg} shows that 
\[
\lambda^k_\mathrm{max}(A-\min_{ij}A_{ij}~\ones\ones^T) \geq \frac{k}{p} \mu(k,p)\lambdamax(A-\min_{ij}A_{ij}~\ones\ones^T).
\]
We then get
\BEAS
&&\frac{k}{p}\mu(k,p)  \lambdamax(A-\min_{ij}A_{ij}~\ones\ones^T) +  \min_{ij}A_{ij} (\ones^Tx)^2\\
& \geq &  \frac{k}{p}\mu(k,p)  (\lambdamax(A) -\min_{ij}A_{ij}~(\ones^Ty)^2) + \min_{ij}A_{ij} (\ones^Tx)^2\\
& \geq & \frac{k}{p}\mu(k,p)  \lambdamax(A)  + \min_{ij}A_{ij} (\ones^Tx)^2
\EEAS
when $\min_{ij}A_{ij}<0$, which follows from the convexity of $\lambdamax(\cdot)$, where $y$ is a leading eigenvector of $A-\min_{ij}A_{ij}~\ones\ones^T$. We conclude using $(\ones^Tx)^2\leq\|x\|_1^2\leq \Card(x) \|x\|_2^2=k$.
\end{proof}

\section{Convex Minimization Algorithm}\label{s:algos}
The relaxation in Section \ref{s:relax} meant solving
\[\BA{ll}
\mbox{maximize} & \Tr MZ\\
\mbox{subject to} & \|Z\|_1 \leq k\\
& \Tr(Z)=1,\,Z\succeq 0
\EA\]
in the variable $Z\in\symm_p$, where $M\in\symm_p$ was formed as $M=X^T(yy^T-\rho \idm)X$. We compute the dual of this problem by first writing it in a saddle-point format.
\[
\min_{\lambda\geq 0} ~\max_{\substack{\Tr(Z)=1\\Z\succeq 0}} \Tr MZ + \lambda (k - \|Z\|_1)
\]
which is also
\[
\min_{\lambda\geq 0}~ \max_{\substack{\Tr(Z)=1\\Z\succeq 0}}~ \min_{\|Y\|_\infty\leq 1} \Tr Z(M+\lambda Y)  + k \lambda
\]
in the variables $Z,Y\in\symm_p$. We can rewrite this as
\[
\min_{Y\in\scriptsize\symm_p}~ \max_{\substack{\Tr(X)=1\\X\succeq 0}} \Tr X(M + Y)  + k \|Y\|_\infty
\]
which is equivalent to
\BEQ\label{eq:relax-dual}
\min~ \lambdamax(M+Y) + k\|Y\|_\infty\\
\EEQ
in the variable $Y\in\symm_p$. This is a maximum eigenvalue minimization problem and can be solved efficiently using for example smooth first-order algorithms as in \cite{Nest03a}. Given an a priori bound on suboptimality, the total complexity of obtaining a solution up to accuracy $\epsilon$ then grows as
\[
O\left( \frac{k n^3 \sqrt{\log n}}{\epsilon} \right).
\]
Given an approximate solution $Y\in\symm_p$ to the dual, we can reconstruct a corresponding primal solution $Z$ by first solving
\[
Z=\argmax_{\substack{\Tr(Z)=1\\Z\succeq 0}} ~\Tr Z(M+Y)
\]
and checking if $\|Z\|_1\leq k$ (this last condition will always be satisfied if $Y$ is optimal).

\section{Numerical Results}\label{s:numres}
Table \ref{t:bandb} presents numerical experiments using branch-and-bound on a set of small artificial problems. We generate normally distributed matrices $X\in\reals^{n \times p}$, a random sparse vector $w$ whose cardinality is at most $k$, and a righthand side vector $y$, which is equal to $Xw+\epsilon$, where $\epsilon \in \reals^n$ is noise. The last four columns are related to the performance of the B\&B algorithm: the first gives the smallest number of nodes visited by the algorithm, the second provides the average number of nodes visited over all instances, the third shows the number of nodes in the complete enumeration tree while the fourth lists the average speedup. These results suggest that the lower bound obtained in this paper is effective in fathoming a significant number of nodes in the search tree. Out of these 160 small test instances, the forward greedy algorithm found the optimal solution for 105 problems, whereas the randomization algorithm followed by a greedy improvement step (which will be referred as the enhanced randomization algorithm from now on) was able to find the optimal solution for 113 problems. Unfortunately, the authors of \cite{Mogh08} did not release a software package and the ``leaps and bounds'' package released by the authors of \cite{Furn00} does not output the number of nodes it visits so direct comparisons were not possible. 

\begin{table}[ht]
{\small
\begin{center}
\caption{Number of nodes visited by the branch-and-bound algorithm.}\label{t:bandb} \vspace{1ex}
\begin{tabular}{|l|l|l||c||c|c|c|c|}\hline
$p$   &  $ n$  & $k$ & No. instances & B\&B (Best) & B\&B (Average) & $p\choose k$ & Speedup (Avg.) \\ \hline
20  &   10  &   2   &   100 & 35   &  194 & 380 &  2 \\
30  &   15 &   3   &   50 & 330   & 4 799  & 24 360 & 5  \\
40  &   20 &   4   &   10 & 42 236   &   98 236 & 2 193 360 & 22 \\
50  &   25 &   4   &    2  & 71 552   &   96 734 & 5 527 200 & 57 \\
  \hline
\end{tabular}\end{center}}
\end{table}

On larger instances where $p=100$ and $n=50$, the cardinality of $w$ was set to 2 and 4. Figure~\ref{fig:ilst} plots lower bounds (Low. Bnd.) on~\eqref{eq:subsel} generated by solving relaxation~\eqref{eq:relax}, the coarse solution points (Primal) extracted from the matrix $Z$ solving~\eqref{eq:relax}, the solutions (Greedy) obtained by the forward greedy algorithm, the LARS algorithm \cite{Efro04}, and the enhanced solutions (Rand) obtained by applying the randomization algorithm detailed in Section~\ref{ss:rand} to the matrix $Z$ solving~\eqref{eq:relax}. We observe that around the true cardinality of $w$ used in generating the problem instances, the enhanced relaxation sometimes outperforms both the forward greedy algorithm and LARS and always performs at least as good as the best of these two methods. 

\begin{figure}[!h]
\begin{center}
\begin{tabular}{cc}
\psfrag{k}[t][b]{Cardinality $k$}
\psfrag{Psik}[b][t]{$\psi(k)$}
\includegraphics[width=.47\textwidth]{./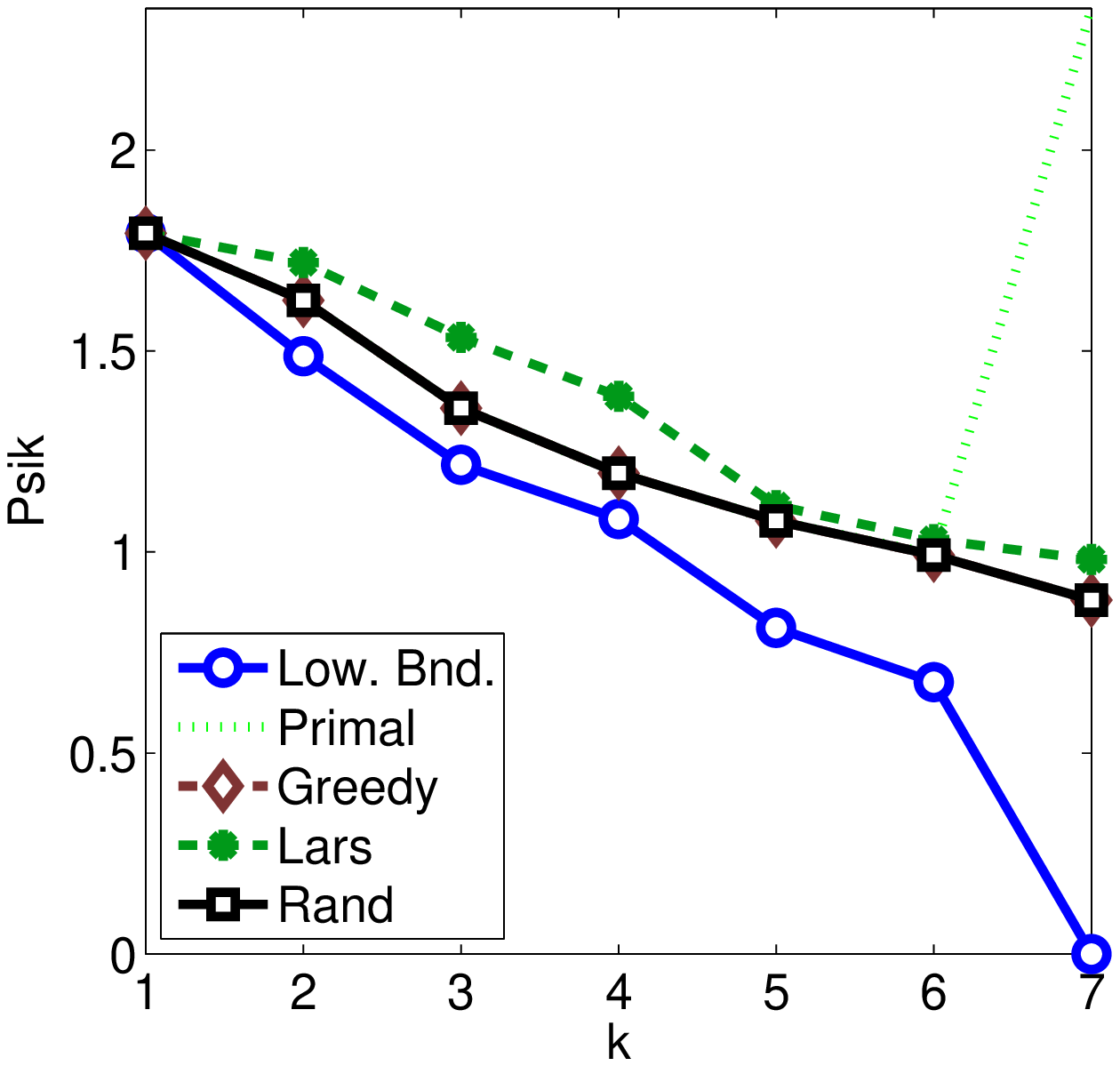} &
\psfrag{k}[t][b]{Cardinality $k$}
\psfrag{Psik}[b][t]{$\psi(k)$}
\includegraphics[width=.47\textwidth]{./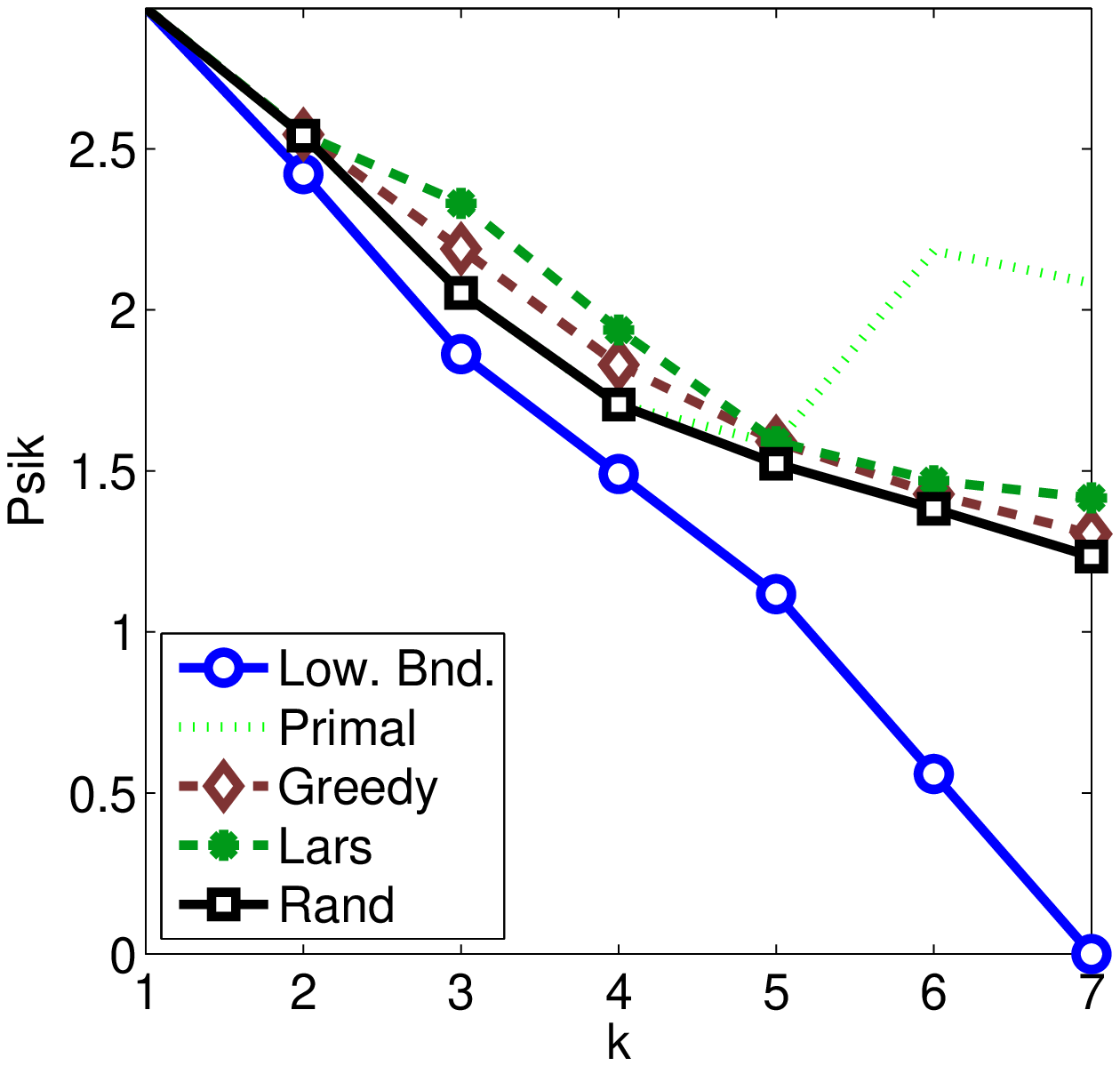}
\end{tabular}
\caption{Lower bounds and objective values in~\eqref{eq:subsel} for various algorithms, versus target cardinality. The true cardinality is 2 (left) and 4 (right). \label{fig:ilst}}
\end{center}
\end{figure}

More realistic data sets were generated with an image compression setting in mind. $X \in \reals^{n \times p}$ is now an overcomplete dictionary of Gabor wavelets, and $y$ is an image patch of size $r \times r$ obtained from an actual image. We set $r=10$ for all the experiments. We first solve this batch of problems (for $p=24$ and $n = 16$)  with the B\&B algorithm where the target cardinality is either 2, 3, or 4. We then compare the performance of the forward greedy algorithm and the enhanced randomization algorithm of Section~\ref{ss:rand}. Table \ref{t:gabor1} shows that the modified randomization algorithm finds the optimal solution in most cases.


\begin{table}[ht]
{\small
\begin{center}
\caption{Number of instanced solved by greedy and randomization algorithms on image data.}\label{t:gabor1} \vspace{1ex}
\begin{tabular}{|l|l|l||c||c|c||c|c|}\hline
 \multicolumn{3}{|c||}{Dimensions} & & \multicolumn{2}{|c||}{Greedy} &\multicolumn{2}{|c|}{ Randomization} \\ \hline
$p$   &  $ n$  & $k$ & No. instances &  No. solved &  Max. Rel. Gap & No. Solved & Max. Rel. Gap \\ \hline
24  &   16  &   2   &   10 & 9   & 0.22  & 9 &   0.90\\
24  &   16 &   3   &   10 & 8   &  0.70 & 9 & 0.16  \\
24  &   16 &   4   &   10 & 8   &  0.94 & 9 & 0.31  \\
  \hline
\end{tabular}\end{center}}
\end{table}

Most of our experiments so far were focused on finding {\em exact} solutions to small instances of problem~\eqref{eq:subsel}. We also tested the numerical complexity of our methods on larger problems for which we only sought good upper and lower bounds. Computing times for solving relaxation~\eqref{eq:relax} on increasingly large Gaussian random problems (generated as above) are reported in Table~\ref{tab:cpu}.

\begin{table}[h!]
{\small
\begin{center}
\caption{CPU time versus problem size.}\label{tab:cpu} \vspace{1ex}
\begin{tabular}{|r|c|}\hline
Problem size $p$ & CPU time\\
\hline
100 & 0 h 00 m 07 s\\
250 & 0 h 01 m 32 s\\
500 & 0 h 10 m 19 s\\
1000 & 1 h 22 m 59 s\\
\hline
\end{tabular}\end{center}}
\end{table}

\subsubsection*{Acknowledgments}
The last author would like to acknowledge partial support from NSF grants SES-0835550 (CDI), CMMI-0844795 (CAREER), CMMI-0968842, a Peek junior faculty fellowship, a Howard B. Wentz Jr. award and a gift from Google. 

\bibsep 1ex
\small{\bibliographystyle{plainnat}
\bibliography{MainPerso}}

\begin{thebibliography}{21}
\providecommand{\natexlab}[1]{#1}
\providecommand{\url}[1]{\texttt{#1}}
\expandafter\ifx\csname urlstyle\endcsname\relax
  \providecommand{\doi}[1]{doi: #1}\else
  \providecommand{\doi}{doi: \begingroup \urlstyle{rm}\Url}\fi

\bibitem[Candes and Tao(2007)]{Cand07}
E.~Candes and T.~Tao.
\newblock {The Dantzig selector: statistical estimation when p is much larger
  than n}.
\newblock \emph{Annals of Statistics}, 35\penalty0 (6):\penalty0 2313--2351,
  2007.

\bibitem[d'Aspremont et~al.(2007)d'Aspremont, El~Ghaoui, Jordan, and
  Lanckriet]{dasp04a}
A.~d'Aspremont, L.~El~Ghaoui, M.I. Jordan, and G.~R.~G. Lanckriet.
\newblock A direct formulation for sparse {PCA} using semidefinite programming.
\newblock \emph{SIAM Review}, 49\penalty0 (3):\penalty0 434--448, 2007.

\bibitem[d'Aspremont et~al.(2008)d'Aspremont, Bach, and El~Ghaoui]{dasp08b}
A.~d'Aspremont, F.~Bach, and L.~El~Ghaoui.
\newblock Optimal solutions for sparse principal component analysis.
\newblock \emph{Journal of Machine Learning Research}, 9:\penalty0 1269--1294,
  2008.

\bibitem[Donoho and Tanner(2005)]{Dono05a}
D.L. Donoho and J.~Tanner.
\newblock Neighborliness of randomly projected simplices in high dimensions.
\newblock \emph{Proceedings of the National Academy of Sciences}, 102\penalty0
  (27):\penalty0 9452--9457, 2005.

\bibitem[Efron et~al.(2004)Efron, Hastie, Johnstone, and Tibshirani]{Efro04}
B.~Efron, T.~Hastie, I.~Johnstone, and R.~Tibshirani.
\newblock {Least angle regression}.
\newblock \emph{Annals of Statistics}, 32\penalty0 (2):\penalty0 407--499,
  2004.

\bibitem[Elad and Aharon(2006)]{Elad06}
M.~Elad and M.~Aharon.
\newblock {Image denoising via sparse and redundant representations over
  learned dictionaries}.
\newblock \emph{IEEE Transactions on Image Processing}, 15\penalty0
  (12):\penalty0 3736--3745, 2006.

\bibitem[Feige and Langberg(2001)]{Feig01}
U.~Feige and M.~Langberg.
\newblock {Approximation algorithms for maximization problems arising in graph
  partitioning}.
\newblock \emph{Journal of Algorithms}, 41\penalty0 (2):\penalty0 174--211,
  2001.

\bibitem[Feige and Seltser(1997)]{Feig97}
U.~Feige and M.~Seltser.
\newblock {On the densest $k$-subgraph problem}.
\newblock Technical report, Department of Applied Mathematics and Computer
  Science, The Weizmann Institute, 1997.

\bibitem[Feige et~al.(2001)Feige, Peleg, and Kortsarz]{Feig01a}
U.~Feige, D.~Peleg, and G.~Kortsarz.
\newblock {The dense $k$-subgraph problem}.
\newblock \emph{Algorithmica}, 29\penalty0 (3):\penalty0 410--421, 2001.

\bibitem[Furnival and Wilson~Jr(2000)]{Furn00}
G.M. Furnival and R.W. Wilson~Jr.
\newblock {Regressions by leaps and bounds}.
\newblock \emph{Technometrics}, 42\penalty0 (1):\penalty0 69--79, 2000.

\bibitem[Goemans and Williamson(1995)]{goem95}
M.X. Goemans and D.P. Williamson.
\newblock Improved approximation algorithms for maximum cut and satisfiability
  problems using semidefinite programming.
\newblock \emph{J. ACM}, 42:\penalty0 1115--1145, 1995.

\bibitem[Hand(1981)]{Hand81}
DJ~Hand.
\newblock {Branch and bound in statistical data analysis}.
\newblock \emph{The Statistician}, pages 1--13, 1981.

\bibitem[Horn and Johnson(1985)]{Horn85}
R.A. Horn and C.R. Johnson.
\newblock \emph{{Matrix Analysis}}.
\newblock Cambridge University Press, 1985.

\bibitem[Kortsarz and Peleg(1993)]{Kort93}
G.~Kortsarz and D.~Peleg.
\newblock {On choosing a dense subgraph}.
\newblock In \emph{Foundations of Computer Science, 1993. Proceedings., 34th
  Annual Symposium on}, pages 692--701, 1993.

\bibitem[Mairal et~al.(2008)Mairal, Sapiro, and Elad]{Mair08}
J.~Mairal, G.~Sapiro, and M.~Elad.
\newblock {Learning multiscale sparse representations for image and video
  restoration}.
\newblock \emph{SIAM Multiscale Modeling and Simulation}, 7\penalty0
  (1):\penalty0 214--241, 2008.

\bibitem[Meinshausen et~al.(2007)Meinshausen, Rocha, and Yu]{Mein07a}
N.~Meinshausen, G.~Rocha, and B.~Yu.
\newblock A tale of three cousins: Lasso, l2boosting, and danzig.
\newblock \emph{Annals of Statistics}, 35\penalty0 (6):\penalty0 2373--2384,
  2007.

\bibitem[Moghaddam et~al.(2008)Moghaddam, Gruber, Weiss, and Avidan]{Mogh08}
B.~Moghaddam, A.~Gruber, Y.~Weiss, and S.~Avidan.
\newblock {Sparse regression as a sparse eigenvalue problem}.
\newblock In \emph{Information Theory and Applications Workshop, 2008}, pages
  121--127, 2008.

\bibitem[Narendra and Fukunaga(1977)]{Nare77}
PM~Narendra and K.~Fukunaga.
\newblock {A branch and bound algorithm for feature subset selection}.
\newblock \emph{IEEE Transactions on Computers}, 100\penalty0 (26):\penalty0
  917--922, 1977.

\bibitem[Natarajan(1995)]{Nata95}
B.~K. Natarajan.
\newblock Sparse approximate solutions to linear systems.
\newblock \emph{SIAM J. Comput.}, 24\penalty0 (2):\penalty0 227--234, 1995.

\bibitem[Nesterov(2003)]{Nest03a}
Y.~Nesterov.
\newblock \emph{Introductory Lectures on Convex Optimization}.
\newblock Springer, 2003.

\bibitem[Tibshirani(1996)]{Tibs96}
R.~Tibshirani.
\newblock Regression shrinkage and selection via the {LASSO}.
\newblock \emph{Journal of the {R}oyal statistical society, series {B}},
  58\penalty0 (1):\penalty0 267--288, 1996.

\end{thebibliography}

\end{document}